\newtheorem{theorem}{Theorem}[section]
\newtheorem{lemma}[theorem]{Lemma}
\newtheorem{proposition}[theorem]{Proposition}
\theoremstyle{definition}
\newtheorem{example}[theorem]{Example}
\newtheorem{remark}[theorem]{Remark}
\numberwithin{equation}{section}
\renewcommand{\leq}{\leqslant}
\renewcommand{\geq}{\geqslant}
\begin{document}

\title[A Classification of the flag-transitive $2$-$(v,3,\lambda)$ designs]{A Classification of the flag-transitive $2$-$(v,3,\lambda)$ designs with with $v\equiv 1,3\pmod{6}$ and $v \equiv 6 \pmod{\lambda}$} 
\author[]{Alessandro Montinaro and Eliana Francot}
\address{Eliana Francot, Dipartimento di Matematica e Fisica “E. De Giorgi”, University of Salento, Lecce, Italy}
\email{alessandro.montinaro@unisalento.it}
\address{Alessandro Montinaro, Dipartimento di Matematica e Fisica “E. De Giorgi”, University of Salento, Lecce, Italy}
\email{alessandro.montinaro@unisalento.it}
\subjclass[MSC 2020:]{05B05; 05B25; 20B25}
\keywords{ $2$-design; automorphism group; flag-transitive; orbitl.}
\date{\today }

\begin{abstract}
In this paper, we provide a complete classification of the $2$-$(v,3,\lambda )$ designs with $v\equiv 1,3\pmod{6}$ and $%
v \equiv 6 \pmod{\lambda}$ admitting a flag-transitive automorphism group non-isomorphic to a subgroup of $A\Gamma L_{1}(v)$. 
\end{abstract}

\maketitle

\section{Introduction and main result}\label{IandR}
A $2$-$(v,k,\lambda )$ \emph{design} $\mathcal{D}$ is a pair $(\mathcal{P},%
\mathcal{B})$ with a set $\mathcal{P}$ of $v$ points and a set $\mathcal{B}$
of blocks such that each block is a $k$-subset of $\mathcal{P}$ and each two
distinct points are contained in $\lambda $ blocks. We say $\mathcal{D}$ is 
\emph{non-trivial} if $2<k<v$, and symmetric if $v=b$. All $2$-$(v,k,\lambda
)$ designs in this paper are assumed to be non-trivial. An automorphism of $%
\mathcal{D}$ is a permutation of the point set which preserves the block
set. The set of all automorphisms of $\mathcal{D}$ with the composition of
permutations forms a group, denoted by $\mathrm{Aut(\mathcal{D})}$. For a
subgroup $G$ of $\mathrm{Aut(\mathcal{D})}$, $G$ is said to be \emph{%
point-primitive} if $G$ acts primitively on $\mathcal{P}$, and said to be 
\emph{point-imprimitive} otherwise. In this setting, we also say that $%
\mathcal{D}$ is either \emph{point-primitive} or \emph{point-imprimitive}, respectively. A \emph{flag} of $\mathcal{D}$ is a pair $(x,B)$ where $x$ is
a point and $B$ is a block containing $x$. If $G\leq \mathrm{Aut(\mathcal{D})%
}$ acts transitively on the set of flags of $\mathcal{D}$, then we say that $%
G$ is \emph{flag-transitive} and that $\mathcal{D}$ is a \emph{%
flag-transitive design}.

The $2$-$(v,k,\lambda )$ designs $\mathcal{D}$ admitting a flag-transitive
automorphism group $G$ have been widely studied by several authors. If $\lambda=1$, that is when $\mathcal{D}$ is a linear space, then $G$ acts point-primitively on $\mathcal{D}$ by a famous result due to Higman and McLaughlin \cite{HM} dating back to 1961. In 1990, Buekenhout, Delandtsheer, Doyen, Kleidman, Liebeck and Saxl \cite{BDDKLS} obtained a classification of $2$-designs with $\lambda =1$ except when $v$ is a power of a prime and $G \leq A \Gamma L_{1}(v)$. If $\lambda>1$, it is no longer true that $G$ acts point-primitively on $\mathcal{D}$ as shown by Davies in \cite{Da}.

It is proven in \cite{Mo} that, the blocks of imprimitivity of a family of flag-transitive point-imprimitive symmetric $2$-designs investigated in \cite{PZ} have the structure of the flag-transitive $2$-$(v,3,\lambda )$-designs with $v\equiv 1,3\pmod{6}$ and $v \equiv 6 \pmod{\lambda}$. Thus, the purpose of this paper is to establish a classification of such $2$-design in order to provide meaniningful constraints on the above mentioned family of flag-transitive point-imprimitive symmetric $2$-designs investigated in \cite{PZ}. Here, it is the classification:

\bigskip

\begin{theorem}
\label{Main}Let $\mathcal{D}$ be a $2$-$(v,3,\lambda )$-design with $v\equiv
1,3\pmod{6}$ and $v \equiv 6 \pmod{\lambda}$ admitting a flag-transitive
automorphism group $G$. Then one of the following holds:

\begin{enumerate}

\item Then $\mathcal{D}$ is $2$-$(p^{d},3,\lambda )$ design with either $p=3$ and $d$ odd, or $p^{d}\equiv 7\pmod{12}$, and $G_{0}<\Gamma L_{1}(p^{d})$.

\item $\mathcal{D}$ is a $2$-design as in Example \ref{Ex}(1) for $q$ even, $q-1\mid h-6$ and either $h\equiv 0 \pmod{3}$ for $q\equiv 1 \pmod{3}$, or $q\equiv 2 \pmod{3}$. Moreover, one of he following holds:

\begin{enumerate}
\item $PSL_{h}(q)\trianglelefteq G\leq P\Gamma L_{h}(q)$.

\item $G\cong A_{7}$ and $(h,q)=(4,2)$.
\end{enumerate}

\item $\mathcal{D}$ is a $2$-design as in Example \ref{Ex}(2) for $q=5$ and $h$ odd, and $PSL_{h}(5)\trianglelefteq G\leq PGL_{h}(5)$.

\item $\mathcal{D}\cong AG_{h}(3)$, $h \geq 2$, and $G$ is of affine type.
\end{enumerate}
\end{theorem}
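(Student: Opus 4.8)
The plan is to combine the elementary flag-transitive counting with the O'Nan--Scott theorem, treating the point-imprimitive and point-primitive cases separately. I would first record the arithmetic forced by the parameters: since $k=3$, the identity $\lambda(v-1)=r(k-1)$ gives $r=\lambda(v-1)/2$ and $b=\lambda v(v-1)/6$, while flag-transitivity yields $vr\mid|G|$, $r\mid|G_x|$ for a point stabiliser $G_x$, and the classical inequality $r^{2}>\lambda v$. The congruences $v\equiv 1,3\pmod 6$ force $v$ odd (so in the affine case the characteristic is odd), and $v\equiv 6\pmod{\lambda}$ is recorded as $\lambda\mid v-6$; these two divisibility facts are what ultimately cut the surviving families down to the arithmetic shapes listed in the statement. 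The first branching is point-primitive versus point-imprimitive.

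For the point-imprimitive case I would invoke the Delandtsheer--Doyen inequalities, which apply since flag-transitivity implies block-transitivity. With $k=3$ one has $\binom{k}{2}=3$, so their bound on the number of classes $c$ and the common class size $d$ (with $v=cd$) leaves only finitely many parameter pairs; intersecting these with $v\equiv 1,3\pmod6$ and $\lambda\mid v-6$ reduces to a short hand-check, which I expect either to be empty or to reproduce the affine examples. This disposes of imprimitivity and lets me assume $G$ is point-primitive for the remainder.

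In the point-primitive case I would apply O'Nan--Scott and first eliminate every type other than affine and almost simple. The product-action, twisted-wreath and diagonal types are incompatible with block size $3$ (there is no room for a nontrivial product or diagonal structure on triples) and with $vr\mid|G|$ together with $r^{2}>\lambda v$, so $\Soc(G)$ is either elementary abelian or nonabelian simple. In the affine case $v=p^{d}$ with $p$ odd and $G=V\rtimes G_{0}$, $G_{0}\leq\GaL_{d}(p)$: if $G_{0}\leq\GaL_{1}(p^{d})$ the semilinear structure together with $v\equiv1,3\pmod6$ and $\lambda\mid v-6$ forces exactly the conditions of case (1) (namely $p=3$ with $d$ odd, or $p^{d}\equiv 7\pmod{12}$); otherwise I would appeal to Hering's classification of the transitive linear groups to identify $G_{0}$, and matching the admissible $G_{0}$ against block size $3$ and the congruences singles out the affine geometry $\AG_h(3)$ of case (4).

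The almost simple case is the crux and the main obstacle. Here $\Soc(G)=T$ is simple with a primitive action of degree $v\equiv 1,3\pmod6$, and I would run through the list of simple groups and their primitive actions, using $r^{2}>\lambda v$ and $\lambda\mid v-6$ to discard all but a few candidates. The survivors should be $\PSL_h(q)$ acting on the points of $\PG_{h-1}(q)$ (respectively on the natural module), with the blocks identified as a $G$-orbit of triples forming the $2$-design; verifying flag-transitivity and extracting precisely the divisibility conditions ``$q$ even, $q-1\mid h-6$, and $h\equiv0\pmod3$ when $q\equiv1\pmod3$'' for case (2), and ``$q=5$, $h$ odd'' for case (3), is where the delicate arithmetic lives. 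The isolated isomorphism $A_{7}<\PSL_4(2)\cong A_{8}$ accounts for the sporadic entry (2b). I expect the hardest and most error-prone part to be this exhaustive elimination over all simple socles while simultaneously tracking $\lambda\mid v-6$ and the flag-transitive bound, and then pinning down exactly which $\PSL_h(q)$ support an invariant family of triples.
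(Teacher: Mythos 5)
Your skeleton (imprimitive versus primitive, then O'Nan--Scott, then affine versus almost simple) misses the one observation that makes this classification tractable, and two of your branches fail as written. Since $v\equiv 1,3\pmod 6$ forces $v$ odd, we have $r/\lambda=(v-1)/(k-1)=(v-1)/2$, and flag-transitivity forces every subdegree of $G$ to be a multiple of $r/\lambda$; hence every subdegree is $(v-1)/2$ or $v-1$, so $G$ is automatically point-primitive (a block of imprimitivity would have size $(v+1)/2$, which cannot divide $v$) and is either point-$2$-transitive or of rank $3$ with two suborbits of length $(v-1)/2$. This is the paper's Lemma \ref{PP}, via \cite[Corollary 4.6]{Ka69}, and everything downstream depends on it. It disposes of imprimitivity without Delandtsheer--Doyen (your $v=cd\leq 9$ case, which you leave at ``I expect\dots to be empty'', is never actually closed); it eliminates the product-action, diagonal and twisted-wreath types without your unsubstantiated claim that they are ``incompatible with block size $3$''; and, crucially, it converts the almost simple branch into a question about point-$2$-transitive groups, where Kantor's classification of $2$-transitive designs \cite{Ka85} immediately leaves only four candidate socles ($A_v$, $PSL_h(q)$, $PSU_3(2^{2m+1})$, $A_7$). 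Your plan to ``run through the list of simple groups and their primitive actions'' armed only with $r^2>\lambda v$ and $\lambda\mid v-6$ is not a proof: without the $2$-transitivity reduction that search is unbounded, and no finite case analysis is exhibited.

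The second concrete failure is in the affine branch. Hering's theorem classifies linear groups \emph{transitive} on nonzero vectors, which is exactly the case where $G$ is $2$-transitive; in the rank-$3$ case $G_0$ has two orbits of length $(p^d-1)/2$ and Hering says nothing. This is where the paper must instead invoke Liebeck's affine rank-$3$ classification \cite{Lieb2} together with Foulser and Foulser--Kallaher \cite{F,FK}: when $G_0\nleq \Gamma L_1(p^d)$ the only possibilities are $SL_2(5)\trianglelefteq G_0$ on $V_4(3)$ or $G_0\leq N_{GL_2(q)}(Q_8)$ with $q=7,23,47$, and each is killed by showing $-1\in G_0$, whence a block through $0$ has the form $\{0,x,-x\}$ and $r=(p^d-1)/4$, contradicting $r=\lambda(p^d-1)/2$ with $\lambda$ odd. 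The same $-1\notin G_0$ argument, fed into Foulser--Kallaher's structure theorem for rank-$3$ subgroups of $\Gamma L_1(p^d)$, is what produces the precise arithmetic of case (1) ($p=3$ with $d$ odd, or $p^d\equiv 7\pmod{12}$, plus the conditions of Proposition \ref{tre2tr}); your proposal asserts these follow from ``the semilinear structure together with the congruences'' but gives no derivation, and they cannot be obtained without the suborbit information. So, as it stands, the proposal has genuine gaps in the imprimitive, rank-$3$ affine, and almost simple branches, all traceable to the missing subdegree lemma.
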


\bigskip

\begin{remark}
Additional constrains for $\mathcal{D}$ and $G$ as in case (1) are provided in Proposition \ref{tre2tr}. Examples corresponding to (1) are the Netto triple systems, in particular $PG_{2}(2)$, for $p^{d}\equiv 7\pmod{12}$, and $\mathcal{D}\cong AG_{h}(3)$ for $p=3$. More details on Netto triple systems can be found in \cite[Section 4.2]{BDDKLS}.

In cases (2)--(4), $G$ acts point-$2$-transitively on $\mathcal{D}$.
\end{remark}

\bigskip

\begin{example}\label{Ex}
Let $\mathcal{P}$ be the point set of $PG_{h-1}(q)$, $h \geq 2$, $\mathcal{B}_{1}$ the set of all unordered triples of collinear points of $PG_{h-1}(q)$ and $\mathcal{B}_{2}$ the set of all triangles of $PG_{h-1}(q)$. Then the following hold:
\begin{enumerate}
    \item $\mathcal{D}_{1}=(\mathcal{P}, \mathcal{B}_{1})$ is a $2$-$\left( \frac{q^{h}-1}{q-1},3,q-1\right)$ design admitting $PSL_{h}(q)\trianglelefteq G\leq P\Gamma L_{h}(q)$ as a flag-transitive automorphism group. Moreover, $\mathcal{D}_{1} \cong PG_{h}(2)$ for $q=2$; 
    \item $\mathcal{D}_{2}=(\mathcal{P}, \mathcal{B}_{2})$ is a $2$-$\left( \frac{q^{h}-1}{q-1},3, q^{2}\frac{q^{h-2}-1}{q-1}\right)$ design admitting $PSL_{h}(q)\trianglelefteq G\leq P\Gamma L_{h}(q)$ as a flag-transitive automorphism group.
\end{enumerate}
\end{example}

\begin{proof}
$\mathcal{D}_{i}=(\mathcal{P}, \mathcal{B}_{i})$ is a $2$-$\left( \frac{q^{h}-1}{q-1},3, \lambda_{i} \right)$ design for $i=1,2$ since $PSL_{h}(q)\trianglelefteq G\leq P\Gamma L_{h}(q)$ acts $2$-transitively $\mathcal{P}$. 

Any element $B$ of $\mathcal{B}_{1}$ is contained in a unique line $\ell$ of $PG_{h}(q)$. Then $\lambda_{1}=q-1$ since the group induced by $G_{\ell}$ on $\ell$ contains $PGL_{2}(q)$ acting $3$-transitively on the points of $\ell$ and inducing $S_{3}$ on each triple of points of $\ell$. This proves (1).

The group $G$ acts transitively on the set of all triangles of $PG_{h}(q)$ by \cite{Per}. This implies 
$$b_{2}={h\brack 3}_{q}\frac{1}{6}q^{3}\left( q+1\right) \left( q^{2}+q+1\right) =\frac{q^{3}(q^{h}-1)(q^{h-1}-1)(q^{h-2}-1)}{6(q-1)^{3}} \textit{,}$$
and hence
$$r_{2} =\frac{q^{3}(q^{h-1}-1)(q^{h-2}-1)}{2(q-1)^{2}}\textit{ and }\lambda
_{2}=\frac{q^{2}(q^{h-2}-1)}{(q-1)}\textit{.}$$  
Clearly, $G$ acts flag-transitively on $\mathcal{D}_{i}$ for $i=1,2$.
\end{proof}

\bigskip

\begin{lemma}
\label{PP}If $\mathcal{D}$ is a $2$-$(v,3,\lambda )$ design with $v\equiv 1,3\pmod{6}$ and $v \equiv 6 \pmod{\lambda}$ admitting a flag-transitive automorphism group $G$, then one
the following holds:

\begin{enumerate}
\item $G$ acts point-$2$-transitively on $\mathcal{D}$.

\item $G$ is a rank $3$ automorphism group of $\mathcal{D}$ and for any point $x$ of $\mathcal{D}$ the set $\mathcal{P}\setminus \left\lbrace
x \right\rbrace$ is split into two $G_{x}$-orbits each of length $r/\lambda=(v-1)/2$.

\end{enumerate}
\end{lemma}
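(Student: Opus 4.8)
The plan is to study how a point stabiliser $G_x$ decomposes the remaining points, using heavily that $k=3$. Since $G$ is flag-transitive it is point-transitive, and for a fixed point $x$ the stabiliser $G_x$ acts transitively on the set of $r$ blocks incident with $x$. Because every block has size $3$, the map $B\mapsto B\setminus\{x\}$ is a $G_x$-equivariant bijection from the blocks through $x$ onto a family of $r$ unordered pairs contained in $\Omega:=\mathcal{P}\setminus\{x\}$, and $G_x$ acts transitively on this family. This is the single structural fact that everything else rests on.

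Next I would let $\Delta_1,\dots,\Delta_s$ be the orbits of $G_x$ on $\Omega$ and assign to each block-pair $\{y,z\}$ its \emph{type}, namely the unordered pair of orbits $\{\Delta(y),\Delta(z)\}$ to which its two members belong. Since orbit membership is $G_x$-invariant, the type is constant on $G_x$-orbits of pairs; hence by transitivity on blocks \emph{every} block through $x$ has one and the same type, say $\{\Delta_a,\Delta_b\}$ (possibly $a=b$). The crucial observation is that since $\lambda\geq 1$ each point $y\neq x$ is incident with at least one block containing $x$, so it occurs in some block-pair; therefore $\Omega$ equals the union of the orbits appearing in the common type, i.e. $\Omega=\Delta_a\cup\Delta_b$.

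This dichotomy is exactly the conclusion. If $a=b$, then $\Omega=\Delta_a$ is a single $G_x$-orbit, so $G$ is point-$2$-transitive, giving case (1). If $a\neq b$, then $\Omega=\Delta_a\cup\Delta_b$ is a union of precisely two orbits, so $G$ has rank $3$, giving case (2). To pin down the two subdegrees in the rank-$3$ case I would run a double count of incidences between $\Delta_a$ and the blocks through $x$: each of the $r$ such blocks meets $\Delta_a$ in exactly one point (as its type is $\{\Delta_a,\Delta_b\}$ with $a\neq b$), while each point of $\Delta_a$ lies in exactly $\lambda$ of them, since the pair $\{x,y\}$ is contained in $\lambda$ blocks. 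Hence $\lambda\lvert\Delta_a\rvert=r$, and symmetrically $\lambda\lvert\Delta_b\rvert=r$. Combining with the standard identity $r(k-1)=\lambda(v-1)$, which for $k=3$ reads $2r=\lambda(v-1)$, yields $\lvert\Delta_a\rvert=\lvert\Delta_b\rvert=r/\lambda=(v-1)/2$, as required; consistency is checked by $\lvert\Delta_a\rvert+\lvert\Delta_b\rvert=v-1$.

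The argument is self-contained and I do not expect a genuine obstacle: the only point that must be argued carefully is that all blocks through $x$ share a single type, which follows from transitivity of $G_x$ on blocks together with $G_x$-invariance of the orbits. Note that no use is made of the arithmetic hypotheses $v\equiv 1,3\pmod 6$ and $v\equiv 6\pmod\lambda$; these are needed only to ensure the relevant designs exist and will be invoked later. The entire content of the lemma is thus the clean split between having one or two non-trivial suborbits of $G_x$.
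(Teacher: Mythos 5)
Your proof is correct, but it takes a genuinely different route from the paper's. The paper settles the lemma in one line by citing Kantor's Corollary 4.6 from \cite{Ka69}: for a flag-transitive group, the rank is controlled by $(v-1,k-1)$, which is at most $2$ when $k=3$, so $G$ is either point-$2$-transitive or of rank $3$ with both nontrivial subdegrees equal to $r/\lambda=(v-1)/2$. What you have written is a self-contained proof of exactly that special case: your ``type'' argument (transitivity of $G_{x}$ on the $r$ blocks through $x$ forces all pairs $B\setminus\{x\}$ to meet the same one or two $G_{x}$-orbits, which then cover $\mathcal{P}\setminus\{x\}$) plus the double count $\lambda|\Delta_{a}|=r$ is precisely the combinatorial mechanism underlying Kantor's general result, namely that flag-transitivity forces every nontrivial subdegree to be a multiple of $r/\lambda$ and these multiples sum to $k-1$. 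Your version buys transparency and independence from the literature, and it makes visible why $k=3$ is what bounds the rank; the paper's citation buys brevity and also delivers point-primitivity in the rank-$3$ case (not claimed in the lemma's statement, and in any case re-derived later from \cite[2.3.7(e)]{Demb} in Proposition \ref{tre2tr}, so nothing is lost). Two details you should make explicit: the injectivity of $B\mapsto B\setminus\{x\}$ rests on blocks being distinct $3$-subsets (no repeated blocks), which is how this paper defines designs; and your observation that the congruences $v\equiv 1,3\pmod{6}$ and $v\equiv 6\pmod{\lambda}$ are not needed is accurate --- the paper's own proof uses only $(v-1,k-1)\leq 2$, which holds automatically.
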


\begin{proof}
Either (1) holds, or $G$ is a point-primitive rank $3$ automorphism group of $\mathcal{D}$ and for any point $x$ of $\mathcal{D}$ the set $\mathcal{P}\setminus \left\lbrace
x \right\rbrace$ is split into two $G_{x}$-orbits each of length $(v-1)/2$ by \cite[Corollary 4.6]{Ka69} since $(v-1,k-1)\leq 2$, and hence $r/\lambda=(v-1)/2$.
\end{proof}

\bigskip

\begin{proposition}
\label{tre2tr}If $G$ is a rank $3$ automorphism group of $\mathcal{D}$, then the following hold:
\begin{enumerate}
    \item $\mathcal{D}$ is $2$-$(p^{d},3,\lambda )$ design with either $p=3$ and $d$ odd, or $p^{d}\equiv 7\pmod {12}$;
    \item There are positive integers $m,e,s$ such that $m$ is odd, $e$ is
even, $(m,e)=1$, $ms\mid d$ the prime divisors of $m$ divide $p^{s}-1$ and the following hold:
\begin{enumerate}
\item $G_{0}=\left\langle \bar{\omega}^{2m},\bar{\omega}^{e}\alpha
^{s}\right\rangle $, where $\omega $ be primitive element of $%
GF(p^{d})^{\ast }$, $\bar{\omega}:x\mapsto \omega x$ and $\alpha
:x\mapsto x^{p}$;

\item $B=\left\{ 0,\omega ^{2ms},\omega ^{2mt+1}\right\} $ for some $0\leq
s,t<p^{d}-1$; 

\item $\lambda =\frac{d}{ms\left\vert G_{0,B}\right\vert }$ divides $p^{d}-6$.
\end{enumerate}   
\end{enumerate}
\end{proposition}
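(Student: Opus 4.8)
The plan is to play the equality of the two non-trivial subdegrees against the block size $3$. By Lemma~\ref{PP} I may assume case~(2): $G$ is point-primitive of rank $3$ and, fixing a point $x$, the set $\mathcal{P}\setminus\{x\}$ is the union of two $G_x$-orbits $\Delta_1(x),\Delta_2(x)$, each of length $(v-1)/2$. I would first record that every block through $x$ meets each of $\Delta_1(x),\Delta_2(x)$ in exactly one point. Indeed $G_x$ stabilises $\Delta_1(x)$ and $\Delta_2(x)$ and permutes transitively the $r$ blocks through $x$, so all these blocks distribute their two points other than $x$ between $\Delta_1(x)$ and $\Delta_2(x)$ in the same way; were both points always in $\Delta_1(x)$, no point of $\Delta_2(x)$ would lie in a block with $x$, contradicting $\lambda\ge 1$.

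The heart of the argument is to pin down the pairing of the two non-trivial orbitals. Suppose they were self-paired. Take a block $\{x,y,z\}$ with $y\in\Delta_1(x)$ and $z\in\Delta_2(x)$; self-pairing gives $x\in\Delta_1(y)$, whence by the previous step $z\in\Delta_2(y)$, and self-pairing again gives $y\in\Delta_2(z)$. But also $x\in\Delta_2(z)$, so the block $\{z,x,y\}$ would contain two points of $\Delta_2(z)$, contradicting the previous paragraph. Hence the two orbitals are interchanged: the associated orbital digraph is a tournament, which in a rank~$3$ group is doubly regular, and this forces $v\equiv 3\pmod 4$. Moreover a tournament admits no automorphism of order $2$ (a transposition would reverse an arc), so $G$, acting faithfully on this tournament, has \emph{odd order}.

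By the Feit--Thompson theorem $G$ is then solvable, and a solvable primitive group is of affine type; thus $v=p^d$ and $G=V\rtimes G_0$ with $V=\mathbb{F}_p^d$ and $G_0\le\GL_d(p)$ of odd order. Feeding $v=p^d$ (odd), $v\equiv 3\pmod 4$ and the hypothesis $v\equiv 1,3\pmod 6$ into the Chinese Remainder Theorem yields statement~(1): for $p=3$ one has $v\equiv 3\pmod 6$ automatically, and $v\equiv 3\pmod 4$ is equivalent to $d$ odd; for $p\ne 3$ one gets $v\equiv 1\pmod 6$ together with $v\equiv 3\pmod 4$, i.e. $p^d\equiv 7\pmod{12}$. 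To locate $G_0$ I would appeal to Liebeck's classification of affine rank~$3$ groups: in every class other than the one-dimensional one, $G_0$ involves a classical or exceptional group of even order, incompatible with $|G_0|$ odd; hence $G_0\le\GaL_1(p^d)$.

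It then remains to compute inside $\GaL_1(p^d)=\langle\bar{\omega}\rangle\rtimes\langle\alpha\rangle$. Since $G_0$ has odd order and exactly two orbits on $\mathbb{F}_{p^d}^{\ast}$, these orbits are the unique index-$2$ subgroup and its complement, i.e. the squares (even powers of $\omega$) and the non-squares; casting $G_0$ in Foulser's normal form $\langle\bar{\omega}^{2m},\bar{\omega}^{e}\alpha^{s}\rangle$, the oddness of $|G_0|$ together with the requirement that $\bar{\omega}^{e}\alpha^{s}$ preserve the squares yields $m$ odd, $e$ even, $(m,e)=1$, $ms\mid d$, and that every prime dividing $m$ divides $p^s-1$; this is (2)(a). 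A representative block through $0$ carries one square and one non-square, giving $B=\{0,\omega^{2ms},\omega^{2mt+1}\}$ as in (2)(b). Finally $\lambda$ is recovered by a flag-count, $\lambda=d/(ms\,|G_{0,B}|)$, and the divisibility $\lambda\mid p^d-6$ is immediate from the standing hypothesis $v\equiv 6\pmod{\lambda}$ once $v=p^d$. The main obstacle I anticipate is precisely this last, bookkeeping-heavy stage: putting $G_0$ in Foulser normal form, checking that the stated arithmetic conditions are exactly equivalent to $G_0$ having two equal orbits, and determining $|G_{0,B}|$; by contrast the conceptual crux---the tournament dichotomy and the resulting oddness of $|G|$---is short.
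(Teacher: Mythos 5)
Your proposal is correct, and it reaches the proposition by a genuinely different route from the paper. The paper never mentions tournaments or parity of $|G|$: starting from Lemma~\ref{PP} it observes that the subdegrees are equal, applies the classification of almost simple $3/2$-transitive groups \cite{BGLPS} to split into an almost simple case ($A_{7}\trianglelefteq G\leq S_{7}$, $v=21$, eliminated by counting with block stabilizers) and the affine case, and then, when $G_{0}\nleq \Gamma L_{1}(p^{d})$, runs through the lists of \cite{Lieb2}, \cite{F} and \cite{FK}, eliminating the $SL_{2}(5)$ case and the $Q_{8}$-normalizer cases by showing $-1\in G_{0}$, which forces $r=(p^{d}-1)/4$ and contradicts $r=\lambda(p^{d}-1)/2$ with $\lambda$ odd; the surviving fact $-1\notin G_{0}$ then drives the parity analysis in \cite[Theorem 3.10]{FK}. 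Your argument---blocks through $x$ meet each suborbit once, hence the orbitals cannot be self-paired, hence the orbital digraph is a tournament, hence $|G|$ is odd, hence by Feit--Thompson $G$ is solvable and therefore affine---replaces all three eliminations at once, since the almost simple, $SL_{2}(5)$ and $Q_{8}$ configurations all have even order and simply never arise. It also produces $p^{d}\equiv 3\pmod{4}$, and with it statement (1), before any classification theorem is invoked (even more cheaply than via double regularity: the orbit length $(v-1)/2$ divides the odd number $|G_{x}|$); and oddness of $|G_{0}|$ then yields both $m$ odd (as $m$ divides the odd number $(p^{d}-1)/2$) and $e$ even (an odd-order subgroup of $\Gamma L_{1}(p^{d})$, $p$ odd, lies in the kernel of the multiplier-parity homomorphism onto $Z_{2}$, hence preserves the squares---this also justifies your claim that the two orbits are exactly the squares and non-squares). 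What you pay is the Feit--Thompson theorem, a very heavy hammer used as a black box; what you gain is uniformity and, arguably, lighter classification input: once solvability is known, Foulser's pre-CFSG classification \cite{F} suffices where the paper invokes the CFSG-dependent \cite{BGLPS}. The remaining bookkeeping you defer---Foulser--Kallaher normal form for (2)(a), the block form (2)(b), and the flag count giving $\lambda=d/(ms|G_{0,B}|)\mid p^{d}-6$ in (2)(c)---is handled in exactly the same way (and at the same level of detail) in the paper, so deferring it is not a gap.
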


\begin{proof}
Assume that $G$ is a rank $3$ automorphism group of $\mathcal{D}$. Then $G$ acts point-primitively on $\mathcal{D}$ by \cite[2.3.7(e)]{Demb}, and hence either $Soc(G)$ is non-abelian simple, or $Soc(G)$ is an elementary abelian $p$-group for some prime $p$ by \cite[Theorem 1.1]{BGLPS}.
Assume that $Soc(G)$ is non-abelian simple, then $%
A_{7}\trianglelefteq G\leq S_{7}$ and $v =21$ by \cite[Theorem 1.2]{BGLPS}. Then $\mathcal{D}$ is a $2
$-$(21,3,\lambda )$ design with $\lambda \mid 15$. Then $\lambda =1$ or $3$ since $%
r=10\lambda $ must divide the order of $G$. Actually, $\lambda =1$ is ruled
out in \cite{Del}. Thus, $\lambda=3$ and $r=30$.

Let $x$ be any point of $\mathcal{D}$, then either $G_{x}\cong S_{5}$ or $G_{x}\cong S_{5}\times Z_{2}$ according to
whether $G\cong A_{7}$ or $S_{7}$, respectively, by \cite{At}. Hence, if $B$ is
any block of $\mathcal{D}$ incident with $x$,  either $\left\vert
G_{x,B}\right\vert =4$ or $\left\vert G_{x,B}\right\vert =8$, respectively, since $r=30$. Moreover, $G_{x,B}$ fixes $B$ pointwise
since $B$ intersects each $G_{x}$-orbit in exactly one point of $\mathcal{D}$. However, this
is impossible since $G_{x,y}\cong S_{3}$ or $S_{3}\times Z_{2}$ for any
point of $y$ distinct from $x$ according to
whether $G\cong A_{7}$ or $S_{7}$, respectively, since the non-trivial $G_{x}$-orbits have length $10$.

Assume that $Soc(G)$ is an elementary abelian $p$-group for some prime $p$. Denote $Soc(G)$ by $T$. We may identify the point set of $\mathcal{D}$ with a $h$-dimensional $GF(p)$-space $V$ in a way that $T$ is the translation group of $V$ and $G=T:G_{0}$ with $G_{0} \leq GL_{d}(p)$ acting irreducibly on $V$ since $G$ acts point-primitively on $\mathcal{D}$. Therefore, $v=p^{d}$ with $p$ odd since $v \equiv 1,3 \pmod{6}$. Further, $\lambda$ is odd since $v \equiv 6 \pmod{\lambda}$ and $v$ is odd.

Assume that $G_{0} \nleq \Gamma L_{1}(p^{d})$. If $G_{0}$ is non-solvable, then $V=V_{4}(3)$, $SL_{2}(5)\trianglelefteq G_{0}$ and the non-trivial $G_{0}$-orbits have length $40$ by \cite[Tables 12--14]{Lieb2} and \cite[Corollary 1.3 and Theorems 3.10 and 5.3]{FK}, hence $-1 \in G_{0}$ in this case. Now, assume that $G_{0}$ is solvable. Then one of the following holds by \cite[Theorem 1.1 and
subsequent Remark]{F}:
\begin{enumerate}

\item[(i)] $G_{0}\leq N_{GL_{2}(7)}(Q_{8})$ and the non-trivial $G_{0}$-orbits
have length $24$;

\item[(ii)] $G_{0}\leq N_{GL_{2}(23)}(Q_{8})$ and the non-trivial $G_{0}$-orbits
have length $264$;

\item[(iii)] $G_{0}\leq N_{GL_{2}(47)}(Q_{8})$ and the non-trivial $G_{0}$-orbits
have length $1104$;
\end{enumerate}

In (i)--(iii), $8$ divides the order of $G_{0}$, hence $4$ divides
$\left\vert G_{0}\cap SL_{2}(q)\right\vert $ with $q=7,23$ or $47$. Thus $-1 \in G_{0}$ since the Sylow $2$-subgroups of $SL_{2}(q)$ for $q$ odd are generalized quaternion groups with $-1$ as their unique involution. Thus,  $-1 \in G_{0}$ in each case regardless the solubility of $G_{0}$ when $G_{0} \nleq \Gamma L_{1}(p^{q})$. 

Let $x\in \mathcal{D}$, $x\neq 0$, then $-1$ preserves the $\lambda $
blocks of $\mathcal{D}$ incident with $\pm x$. Moreover, $-1$ preserves at
least one of them, as $\lambda $ is odd. Therefore, $B=\left\{ 0,x,-x\right\} $ and hence $\left\vert G_{0,B}\right\vert =2\left\vert
G_{0,x}\right\vert $. Thus $r=\left \vert G_{0}:G_{0,B}\right\vert =\frac{\left\vert
G_{0}\right\vert }{2\left\vert G_{0,x}\right\vert }=\frac{p^{d}-1}{4}$ with $p^{d} \equiv 1 \pmod{4}$,
whereas $r=\frac{p^{d}-1}{2}\lambda $. Thus, $V=V_{4}(3)$ and $SL_{2}(5)\trianglelefteq G_{0}$ as well as (i)--(iii) are excluded.

Assume that $G_{0} \leq \Gamma L_{1}(p^{d})$. The previous argument shows that $-1 \notin G_{0}$.

Let $\omega $ be primitive element of $GF(p^{d})^{\ast }$,  $\bar{\omega}:x\mapsto \omega x$ and $\alpha
:x\mapsto x^{p}$. By \cite[Theorem 3.10]{FK}, $G_{0}=\left\langle \bar{\omega}
^{2m},\bar{\omega} ^{e}\alpha ^{s}\right\rangle $ with $2m\mid p^{d}-1$ (plus some
extra conditions on $2m$, $e$ and $s$ provided in \cite[Theorem 3.10]{FK}).
If $\frac{p^{d}-1}{2m}$ is even then $-1\in G_{0}$, a contradiction. Thus, $\frac{p^{d}-1}{2m}$ is odd. Note that, $%
2m\mid e\frac{p^{d}-1}{p^{s}-1}$ by \cite[see the proof of Theorem 3.11]%
{FK}, and hence $p^{s}-1\mid e\frac{p^{d}-1}{2m}$.

Suppose that $m$ is even.
Then either $e$ is odd, or $e\equiv 2\pmod{4}$ and $p^{s}\equiv 1\pmod{4}$ by \cite[Theorem 3.10(4)]{FK}. Thus $%
\frac{p^{d}-1}{2m}$ is even, a contradiction. Therefore, $m$ is odd, $e$ is even, $%
(m,e)=1$, $ms\mid d$ and primes of $m$ divide $p^{s}-1$ again by \cite[Theorem
3.10]{FK}. In particular, $p^{d}\equiv 3\pmod{4}$. Then either $p=3$
and $d$ odd, or $p^{d}\equiv 7\pmod{12}$ since $p^{d}\equiv 1,3\pmod{6}$. This proves (1) and (2.a).

The two non-trivial $G_{0}$-orbits on $GF(p^{d})$, say $y_{i}^{G_{0}}$ $i=1,2$ are the set of the squares and non-squares by (2.a) since $e$ is even. Hence, if $B$ is any
element then there are $0\leq s,t<p^{d}-1$ such that $B=\left\{ 0,\omega
^{2ms},\omega ^{2mt+1}\right\} $ since $\left\vert B\cap
y_{i}^{G_{0}}\right\vert =1$ for $i=1,2$, which is (2.b). Moreover, $%
G_{0,B}=G_{0,\omega ^{2ms},\omega ^{2mt+1}}\leq G_{0,\omega ^{2ms}}$.   

Finally, $r=\frac{(p^{d}-1)d}{2ms\left\vert G_{0,B}\right\vert }$ since $%
\left\vert G_{0}\right\vert =\frac{(p^{d}-1)d}{2ms}$, and hence $\lambda =%
\frac{d}{ms\left\vert G_{0,B}\right\vert }$ divides $p^{d}-6$ since $r=\frac{p^{d}-1}{2}%
\lambda $ and $p^{d}\equiv \lambda \pmod{6}$. Thus, we obtain (2.c).
\end{proof}

\bigskip
From now on, we assume that $G$ acts point-$2$-transitively on $\mathcal{D}$, which is the remaining case of Lemma \ref{PP} to be analyzed. In this case, $G$ is either almost simple or of affine type.  
\bigskip

\begin{theorem}
\label{treAS}If $G$ is almost simple, then one of the following holds:

\begin{enumerate}
\item $\mathcal{D}$ is a $2$-design as in Example \ref{Ex}(1) for $q$ even, $q-1\mid h-6$ and either $h\equiv 0 \pmod{3}$ for $q\equiv 1 \pmod{3}$, or $q\equiv 2 \pmod{3}$. Moreover, one of he following holds:

\begin{enumerate}
\item $PSL_{h}(q)\trianglelefteq G\leq P\Gamma L_{h}(q)$;

\item $G\cong A_{7}$ and $(h,q)=(4,2)$.
\end{enumerate}

\item $\mathcal{D}$ is a $2$-design as in Example \ref{Ex}(2) for $q=5$ and $h$ odd, and $PSL_{h}(5)\trianglelefteq G\leq PGL_{h}(5)$.
\end{enumerate}
\end{theorem}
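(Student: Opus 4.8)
The plan is to classify the almost simple, point-$2$-transitive examples by exploiting the known list of finite $2$-transitive groups. Since $G$ is almost simple and acts $2$-transitively on the $v=p^{d}$ points of $\mathcal{D}$ (recall from Proposition~\ref{tre2tr} that point-primitivity forces $v$ to be a prime power when $\Soc(G)$ is abelian, but here $\Soc(G)$ is non-abelian simple), I would first invoke the classification of $2$-transitive almost simple groups. The constraint $v\equiv 1,3\pmod 6$ means $v$ is odd, which already rules out many of the sporadic $2$-transitive actions of even degree. The main families that survive with odd degree are the projective groups $\PSL_{h}(q)\trianglelefteq G\leq\PGaL_{h}(q)$ acting on the $(q^{h}-1)/(q-1)$ points of $\PG_{h-1}(q)$, together with a handful of exceptional small-degree actions (such as $A_{7}$ on $15$ points, which is the $2$-transitive action of $A_{7}$ arising as $\PSL_{4}(2)\cong A_{8}$ restricted, giving $(h,q)=(4,2)$).

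The second step is to determine, for each surviving $2$-transitive action, which block sets $\Bmc$ make $(\Pmc,\Bmc)$ a flag-transitive $2$-$(v,3,\lambda)$ design. Because $G$ is $2$-transitive on points and a block has size $3$, a block $B=\{x,y,z\}$ together with flag-transitivity forces $G$ to act transitively on the set of blocks, and $G_{\{x,y\}}$ to act transitively on the blocks through the pair $\{x,y\}$. The key dichotomy is whether the three points of a block are collinear or not in the underlying projective geometry $\PG_{h-1}(q)$: collinear triples give Example~\ref{Ex}(1), and triangles (non-collinear triples) give Example~\ref{Ex}(2). The congruence $v\equiv 1,3\pmod 6$ and $v\equiv 6\pmod\lambda$ then impose strong number-theoretic restrictions on $(h,q)$, which is precisely where the conditions $q-1\mid h-6$ and the residues $h\equiv 0\pmod 3$ for $q\equiv 1\pmod 3$, or $q\equiv 2\pmod 3$, will emerge. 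For the collinear case I would compute $\lambda=q-1$ as in Example~\ref{Ex}(1) and force $v\equiv 6\pmod{q-1}$; writing $v=(q^{h}-1)/(q-1)\equiv h\pmod{q-1}$ gives $q-1\mid h-6$, and the $\bmod 6$ condition on $v$ pins down the stated residue conditions on $q$ and $h$. For the triangle case, $\lambda=q^{2}(q^{h-2}-1)/(q-1)$ from Example~\ref{Ex}(2), and imposing $\lambda$ odd (which is needed since $v\equiv 6\pmod\lambda$ with $v$ odd) forces $q=5$ with $h$ odd, and the stabilizer analysis restricts the group to $\PSL_{h}(5)\trianglelefteq G\leq\PGL_{h}(5)$.

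I expect the main obstacle to be the careful elimination of the sporadic and small-degree $2$-transitive actions of odd degree that are not of projective type, for instance the $2$-transitive actions of groups like $\M_{11}$, $\M_{23}$, or unitary and Suzuki-type groups, each of which must be checked against the existence of a flag-transitive $2$-$(v,3,\lambda)$ structure with the two divisibility congruences. For each such candidate one computes the admissible $\lambda$ from $r=\lambda(v-1)/2$ and the order of $G$, then tests the congruence $v\equiv 6\pmod\lambda$ and $v\equiv 1,3\pmod 6$; most will fail arithmetically, but verifying this uniformly requires either an appeal to the block-stabilizer structure (a block of size $3$ forces $G_{B}$ to induce a subgroup of $S_{3}$, constraining $|G_{x,B}|$ and hence $\lambda$) or a direct case-by-case computation. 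The delicate point in the affirmative cases is confirming flag-transitivity rather than mere block-transitivity: one must verify that $G_{B}$ induces the full symmetric group $S_{3}$ on each block (equivalently that $G_{x,B}$ acts transitively on $B\setminus\{x\}$), which for the projective examples follows from the $3$-transitivity of $\PGL_{2}(q)$ on a line in the collinear case, and from the transitivity of $G$ on ordered triangles (via the cited result) in the triangle case.
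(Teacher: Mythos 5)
Your outline follows the same route as the paper: reduce via the classification of almost simple point-$2$-transitive groups (the paper cites \cite[(A)]{Ka85}) using $v\equiv 1,3\pmod 6$ and $\lambda\mid v-6$, then split blocks of the projective case into collinear triples versus triangles and extract the arithmetic conditions. But the cases your plan defers are exactly the ones needing ideas your proposal does not contain. Your sieve keeps too little: the natural ($3$-transitive) actions of $A_v$ survive for every $v\equiv 1,3\pmod 6$ and must be eliminated (the paper does so by noting that point-$3$-transitivity forces $\lambda=v-2$, contradicting $\lambda\leq v-6$); and, more seriously, $\PSU_3(2^{2m+1})$ acting on the $2^{6m+3}+1\equiv 3\pmod 6$ points of the Hermitian unital passes every test you list. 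For that case, computing admissible $\lambda$ from $r=\lambda(v-1)/2$, $r\mid |G_x|$ and $\lambda\mid v-6$ leaves many candidates (even $\lambda=1$ survives this arithmetic), and the generic constraint that $G_B$ induces a subgroup of $S_3$ does not help. The paper's elimination is genuinely geometric: if blocks lie in unital lines then $\lambda=2^{2m+1}-1$ must divide $2^{6m+3}-5$, impossible for $m>0$; if blocks are triangles, Huppert's description of $\Soc(G)_x=Q{:}C$ shows that $C=\Soc(G)_{x,y}$, of order $(2^{4m+2}-1)/3$, acts semiregularly on the points off the line through $x,y$, forcing the lower bound $(2^{4m+2}-1)/3\mid\lambda$, which contradicts $\lambda\mid 2^{6m+3}-5$. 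Nothing in your proposal produces such a lower bound on $\lambda$.

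Two further steps are stated incorrectly or left out. In the triangle case you assert that oddness of $\lambda$ forces $q=5$; it only forces $q$ odd. What forces $q=5$ is the divisibility $\lambda\mid v-6$: writing $q^{h}-6q+5=q^{2}(q^{h-2}-1)+(q-1)(q-5)$ shows that $q^{2}(q^{h-2}-1)$ must divide $(q-1)(q-5)$, impossible unless $q=5$ (and then $h$ odd comes from $v\equiv 1,3\pmod 6$; the restriction to $\PGL_h(5)$ is automatic since $5$ is prime). In case (1.b) you must still show that the $A_7$-invariant design on the $15$ points of $\PG_3(2)$ consists of lines rather than triangles; the paper proves this via a Klein four-subgroup of $G_{x,y}\cong A_4$ fixing a block pointwise (alternatively: the triangle design on $\PG_3(2)$ has $\lambda=12$, which does not divide $v-6=9$). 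So the skeleton of your argument matches the paper's, but the hard eliminations are precisely where it is silent or mistaken.
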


\begin{proof}
Assume that $G$ is an almost simple point-$2$-transitive automorphism group
of $\mathcal{D}$. Then one of the following holds by \cite[(A)]{Ka85} since $v\equiv 1,3%
\pmod{6}$:

\begin{enumerate}
\item[(i)] $Soc(G)\cong A_{v}$ and $\lambda \mid v-6$, $v\equiv 1,3\pmod{6}$;

\item[(ii)] $Soc(G)\cong PSL_{h}(q)$, $h \geq 2$, $\frac{%
q^{h}-1}{q-1}\equiv 1,3\pmod{6}$ and $(h,q)\neq (2,2)$, and $\lambda \mid \frac{q^{h}-1}{q-1}-6$;

\item[(iii)] $Soc(G)\cong PSU_{3}(2^{2m+1})$, $m>0$, and $\lambda \mid 2^{6m+3}-5$;

\item[(iv)] $Soc(G)\cong A_{7}$ and $\lambda  \mid 9$.
\end{enumerate}

If $G$ acts point-$3$-transitively on $\mathcal{D}$, then $v-2= \lambda \leq v-6$, and we reach a contradiction. Thus $G$ cannot act point-$3$-transitively on $\mathcal{D}$ and hence (i) is ruled out.

Assume that (ii) holds. If $h=2$, then the point-$2$-transitive actions of $G$ on $\mathcal{D}$ and on $PG_{1}(q)$ are equivalent. Hence, we may identify the point set of $\mathcal{D}$ with that of $PG_{1}(q)$. Also, $q$ is even since $v=q+1$ and $v \equiv 1,3 \pmod{6}$. Then $G$ acts point-$3$-transitively on $\mathcal{D}$, which is not the case. Thus $h>2$.

Now, $Soc(G)$ has two $2$-transitive
permutation representations of degree $\frac{q^{h}-1}{q-1}$, and these are the one on the set of points of $PG_{h-1}(q)$, and the other on the set of hyperplanes of $PG_{h-1}(q)$. The two conjugacy classes in $PSL_{h}(q)$ (resp. in $P \Gamma L_{h}(q)$) of the point-stabilizers and hyperplane-stabilizers are fused by a polarity of $PG_{h-1}(q)$. Thus, we may identify the point set of $\mathcal{D}$ with that of $PG_{h-1}(q)$.

Let $B$ be any block of $\mathcal{D}$. Then $\mathcal{D} \cong \mathcal{D}_{i}$, where $\mathcal{D}_{i}$ is as in Example $\ref{Ex}$ and $i=1$ or $2$ according as $B$ is contained either in a line or is a triangle of $PG_{h-1}(q)$, respectively, since $ PSL_{h}(q) \unlhd G$ with $h>2$.

If $i=1$, then $\lambda=q-1$. Now, $v \equiv 1,3 \pmod{6}$ implies $v$ odd. Further, $v \equiv 6 \pmod{\lambda}$ implies 
\begin{equation*}
\left( q-1\right) \mid \sum_{i=0}^{h-1}(q^{i}-1)+h-6 \textit{,}
\end{equation*}%
and hence $q-1 \mid v-6$ since $v=\frac{q^{h}-1}{q-1}$. If $q$ is odd then $h-6$, and hence $h$ is even. Then $v$ is even, which is a contradiction. Thus $q$ is a power of $2$.

If $q \equiv 1 \pmod{3}$, then $v \equiv h \pmod{3}$. Further, $h$ is divisible by $3$ since $q-1 \mid h-6$, and hence $v \equiv 3 \pmod{6}$ in this case.

If $q \equiv 2 \pmod{3}$, then  either $v \equiv 1 \pmod{3}$ or $v \equiv 0 \pmod{3}$ according as $h$ is odd or even, respectively. Thus we obtain we obtain (1).

If $i=2$, then $\lambda=q^{2}\frac{q^{h-2}-1}{q-1}$. Now, $v \equiv 6 \pmod{\lambda}$ implies
\begin{equation*}
q^{2}\frac{q^{h-2}-1}{q-1}\mid \frac{q^{h}-1}{q-1}-6
\end{equation*}%
and hence $q=5$. Finally, $v \equiv 1,3 \pmod{6}$ implies $h$ odd. Thus, we obtain (2).

Assume that (iii) holds. Then the actions of $G$ on the
point set of $\mathcal{D}$ and on the point set of the Hermitian
unital $\mathcal{H}$ of order $2^{2m+1}$ are equivalent. Thus, we may identify the point sets of $\mathcal{D}$ and of $\mathcal{H}$.\\ 
Let $B=\left\{ x,y,z\right\}$ be any block of $\mathcal{D}$. If $B$ is contained in a line $\ell ^{\prime }$ of $\mathcal{H}$, then
each block of $\mathcal{D}$ is contained in a unique line of $\mathcal{H}$ since $G$ transitive on the block set of $\mathcal{D}$ as well
as on the line set of $\mathcal{H}$. Thus, $\lambda =2^{2m+1}-1$ since $G$ induces a group containing $PGL_{2}(q)$ as normal subgroup, and this one acts $3$-transitively on $\ell$. So, $
2^{2m+1}-1 \mid 2^{6m+3}-5$ since $\lambda \mid v-6$, a contradiction since $m>0$. Therefore, $B$ consists of points of $\mathcal{H}$ in a triangular
configuration.

By \cite[Satz II.10.12]{Hup}, $Soc(G)_{x}=Q:C$, where $Q$ is a Sylow $2$-subgroup of $Soc(G)$ acting regularly on $\mathcal{H}\setminus \{x\}$ and $C$ is a cyclic group of order $\frac{2^{4m+2}-1}{3}$. Further, if $a$ is any line of $\mathcal{H}$ incident with $x$, then $Z(Q)$ preserves $a$ and acts regularly on $a \setminus \{x\}$, and the Frobenius group $Q/Z(Q):C$ of order $2^{4m+2}\frac{2^{4m+2}-1}{3}$ acts transitively on the set of $2^{4m+2}$ lines of $\mathcal{H}$ incident with $x$. Thus $C=Soc(G)_{x,y}$ acts semiregularly on $\mathcal{H}\setminus s$, where $s$ is the line incident with $x,y$, and hence each $G_{x,y}$-orbit in $\mathcal{H}\setminus s$ is of length a multiple $\frac{2^{4m+2}-1}{3}$. It follows that the number of blocks of $\mathcal{D}$ incident with $x,y$ is\
a multiple of $\frac{2^{4m+2}-1}{3}$. Therefore $\frac{2^{4m+2}-1}{3} \mid \lambda$, and hence $\frac{2^{4m+2}-1}{3} \mid 2^{6m+3}-5$ since $\lambda \mid v-6$ and $v=2^{6m+3}+1$. However, this is impossible since $m>0$, and hence (iii) is ruled out.   

Assume that (iv) occurs. Arguing as in (1), we may identify the points $\mathcal{D}$ with the points of $PG_{3}(2)$. Further, $%
G\cong A_{7}$, $G_{x}\cong SL_{3}(2)$ and $%
G_{x,y}\cong A_{4}$ by \cite{At}. Thus, $T\cong E_{4}$ fixes a pointwise a block $B$ of $\mathcal{D}$ incident with the points $x,y$ since $\lambda \mid 9$. Since $%
N_{G_{x}}(T)\cong S_{4}$ and $G_{x,y}\cong A_{4}$, it follows that $T$ fixes exactly two points on $%
PG_{3}(2) \setminus\{x\}$ and these are necessarily collinear with $x$ ($y$ is one of them). Thus $B$ is a line of $PG_{3}(2)$, and hence $\mathcal{D}\cong
PG_{3}(2)$, which is (1.b).
\end{proof}

\bigskip
Finally, assume that $Soc(G)$ is an elementary abelian $p$-group for some prime $p$ and denote it by $T$. We may identify the point set of $\mathcal{D}$ with a $h$-dimensional $GF(p)$-space $V$ in a way that $T$ is the translation group of $V$ and $G=T:G_{0}$ with $G_{0}$ acting irreducibly on $V$ since $G$ acts point-$2$-transitively on $\mathcal{D}$. Also, both $q$ and $\lambda$ are odd since $v=q^{n}-6$ and $v \equiv 1,3 \pmod{6}$ and $v \equiv 6 \pmod{\lambda}$. \\
\bigskip

\begin{theorem}
\label{treAff}If $G$ is of affine type, then $\mathcal{D}\cong AG_{h}(3)$.
\end{theorem}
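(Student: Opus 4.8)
The plan is to read off $G_{0}$ from Hering's classification of the transitive linear groups and then to force every surviving possibility down to $AG_{h}(3)$ by a parity argument together with a restriction to a line. Since $G$ is point-$2$-transitive of affine type, $G_{0}\leq GL_{h}(p)$ is transitive on $V\setminus\{0\}$, so $G_{0}$ is a transitive linear group, and $p$ and $\lambda$ are odd as recorded above. As $p$ is odd, Hering's theorem leaves, besides the one–dimensional groups $G_{0}\leq \Gamma L_{1}(p^{h})$ (which are handled exactly as in the rank $3$ analysis of Proposition \ref{tre2tr} and recorded in Theorem \ref{Main}(1)), only the families $SL_{a}(q)\trianglelefteq G_{0}$ and $Sp_{2a}(q)\trianglelefteq G_{0}$ with $q^{a}=p^{h}$, respectively $q^{2a}=p^{h}$, the family $G_{2}(q)'$ being excluded because it forces $q$ even, together with finitely many exceptional groups for small $p^{h}$.

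The first step is a parity reduction on the shape of a block. I would write a block through the origin as $B=\{0,u,w\}$; then either $u,w$ lie on a common $GF(q)$-line or they are $GF(q)$-independent. In the latter ``triangle'' case one computes $\lambda$ as the number of admissible third points through the pair $\{0,u\}$: this equals $q^{a}-q$ in the linear family, while in the symplectic family, using Witt's theorem to label the $Sp_{2a}(q)$-orbits of pairs by the value of the symplectic form, it equals $2q^{2a-1}$ or $q^{2a-1}-q$ according as that value is nonzero or zero. All of these are even because $q$ is odd, contradicting that $\lambda$ is odd. Hence every block is collinear and lies on a unique $GF(q)$-line.

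The core of the argument is the restriction to such a line $\ell$. Point-$2$-transitivity makes $G_{\ell}$ induce a $2$-transitive group $\bar G_{\ell}\leq A\Gamma L_{1}(q)$ on the $q$ points of $\ell$, and the blocks contained in $\ell$ form a flag-transitive $2$-$(q,3,\lambda)$ design $\mathcal{D}_{\ell}$. The decisive point is that in both families the line stabilizer realizes the full scalar group $GF(q)^{\ast}$ on $\ell$ (for $Sp_{2a}(q)$ one extends $u$ to a hyperbolic pair and scales by $c,c^{-1}$), so that $AGL_{1}(q)\leq \bar G_{\ell}$. Now $GF(q)^{\ast}$ acts on the $r_{\ell}=\lambda(q-1)/2$ blocks of $\mathcal{D}_{\ell}$ through $0$; a block $\{0,a,b\}$ has a nontrivial scalar stabilizer (of order $2$, generated by $-1$) precisely when $b=-a$, so its orbit has length $(q-1)/2$, while every other orbit has length $q-1$. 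If no symmetric block occurred, all orbits would have length $q-1$ and $(q-1)\mid \lambda(q-1)/2$ would force $\lambda$ even; hence a symmetric block $\{0,a,-a\}$ occurs, its $GF(q)^{\ast}$-orbit already exhausts the $(q-1)/2$ symmetric triples, and since field automorphisms preserve symmetry this orbit is all of the block set through $0$. Therefore $r_{\ell}=(q-1)/2$, whence $\lambda=1$ and every block of $\mathcal{D}_{\ell}$ is a $3$-term arithmetic progression. As a pair of points lies in a unique $3$-term progression only in characteristic $3$, $q$ is a power of $3$, the global blocks are the $GF(3)$-lines of $V$, and $\mathcal{D}\cong AG_{h}(3)$.

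For the finitely many exceptional groups I would argue instead through the fact that each of them contains $-1$: since $\lambda$ is odd, $-1$ permutes the $\lambda$ blocks through a pair $\{x,-x\}$ with a unique fixed member, which must be $\{0,x,-x\}$, so $\{0,x,-x\}$ is a block for every $x\neq 0$; flag-transitivity together with the transitivity of $G_{0}$ on $V\setminus\{0\}$ then forces every block through $0$ to be symmetric, whence $\lambda=1$ and $\mathcal{D}$ consists of all $3$-term progressions. This is impossible unless $p=3$, in which case $\mathcal{D}\cong AG_{h}(3)$, and it eliminates every exceptional case with $p\neq 3$. The step I expect to be most delicate is the symplectic one: both the even-$\lambda$ computation for the two regimes of triangle blocks and the verification that an (always isotropic) line stabilizer induces the full $GF(q)^{\ast}$ rest on the orbit structure of $Sp_{2a}(q)$, and these are exactly what feed the otherwise uniform line-restriction that collapses every remaining case onto $AG_{h}(3)$.
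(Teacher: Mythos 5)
Your main line of argument for the matrix-group families is sound but organized quite differently from the paper. The paper runs a trichotomy on the \emph{shape of a block} rather than on the group: it first shows that if some block is not contained in a $GF(p)$-line then $-1\notin G_{0}$, whence $SL_{n}(q)\trianglelefteq G_{0}$ with $n$ odd, $n\geq 3$ (by \cite{Ka85} and \cite{BF}, which is where the symplectic, exceptional \emph{and} one-dimensional groups are discarded, all of them containing $-1$ for $p$ odd); it then kills the triangle case by a $1$-design count over the $2$-dimensional $GF(q)$-subspaces (via \cite{Per}, forcing $n=3$ and then $\lambda=\lambda^{\prime}(q+1)$, which is even), kills the $GF(q)$-line case by an element inducing $-1$ on that line, and finishes in the $GF(p)$-line case, where the induced group $AGL_{1}(p)$ forces $p=3$. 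Your orbit-parity computation for triangle blocks and your $GF(q)$-line restriction (symmetric blocks, $\lambda=1$, uniqueness of $3$-term progressions) are a more elementary and uniform substitute for the paper's Perin/plane-restriction step, and they do work. Two caveats: your count $\lambda=q^{a}-q$ in the linear family needs $a\geq 3$ (for $a=2$ the stabilizer of $u$ in $SL_{2}(q)$ has orbits of odd size $q$ off the line, and one must fall back on the $Sp_{2}=SL_{2}$ pairing $c\leftrightarrow -c$ behind your even count $2q^{2a-1}$); and your step ``this orbit is all of the block set through $0$'' is valid only because $\mathcal{D}_{\ell}$ is \emph{flag}-transitive, so that the blocks through $0$ form a single orbit of the stabilizer of $0$ inside $\Gamma L_{1}(q)$, which preserves symmetry --- mere block-transitivity would not exclude the design of all $3$-progressions, which has $\lambda=3$ and passes your parity test.

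The genuine gap is your treatment of the one-dimensional case $G_{0}\leq \Gamma L_{1}(p^{h})$. You cannot hand it off to ``the rank $3$ analysis of Proposition \ref{tre2tr}'': that analysis is predicated on $G_{0}$ having two orbits of length $(v-1)/2$ on nonzero vectors, whereas here $G_{0}$ is transitive on them, so none of its steps applies; worse, recording this case under Theorem \ref{Main}(1) contradicts the very statement you are proving, which asserts $\mathcal{D}\cong AG_{h}(3)$ for \emph{every} affine point-$2$-transitive $G$. The case is not vacuous and must be closed inside this proof. Your own exceptional-case argument does it, once you observe that for $p$ odd every subgroup of $\Gamma L_{1}(p^{h})$ transitive on the $p^{h}-1$ nonzero vectors contains $-1$: if $-1\notin G_{0}$, then $G_{0}\cap GF(p^{h})^{\ast}$ has odd order (since $-1$ is the unique involution of the cyclic group $GF(p^{h})^{\ast}$), so the $2$-part of $\left\vert G_{0}\right\vert$ divides the $2$-part of $h$; but transitivity forces $(p^{h}-1)\mid \left\vert G_{0}\right\vert$, and the $2$-part of $p^{h}-1$ strictly exceeds the $2$-part of $h$ for every odd $p$ (clear for $h$ odd, and by lifting the exponent for $h$ even), a contradiction. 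With $-1\in G_{0}$ in hand, your argument that every block through $0$ is $\{0,x,-x\}$, that flag-transitivity gives $\lambda=1$, and that uniqueness of the $3$-term progression through a pair forces $p=3$, yields $\mathcal{D}\cong AG_{h}(3)$ exactly as the theorem requires.
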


\begin{proof}

Let $B$ be any block of $\mathcal{D}$ incident with $0$ and assume that $B$ is not contained in any $1$-dimensional $GF(p)$-subspace of 
$V$. Suppose that $-1\in G_{0}$. If $x$ is a point of $ \mathcal{D}$ and $x\neq 0$, it
follows that $-1$ preserves at least one of the $\lambda $ blocks incident with $\pm x$ since $\lambda $ is odd. Further, $-1$ contains fixes a point on $B$, which is necessarily $0$, since $k=3$. Thus, we may assume that $B$ is the block fixed by $-1$. Then $%
B=\left\{ 0,x,-x\right\} \subseteq \left\langle x\right\rangle _{GF(p)}$, as $k=3$,
but this contradicts our assumption. Therefore $-1\notin G_{0}$, and hence $%
SL_{n}(q)\trianglelefteq G_{0}$ with $n$ an odd divisor of $d$, $n \geq 3$ and $q=p^{d/n}$ by \cite[Section 2, (B)]{Ka85} and by \cite[Lemma 3.12]{BF} since $q$ is odd. 

Assume that $B\subseteq \left\langle u\right\rangle _{GF(q)}$ for some non-zero vector $u$ of $V$. Clearly, $q>p$. Now, there is an element  $\phi$ in $G_{0,\left\langle u\right\rangle _{GF(q)}}$ inducing $-1$
on $\left\langle u\right\rangle _{GF(q)}$ since the group induced by $G_{\left\langle u\right\rangle _{GF(q)}}$ on $\left\langle u\right\rangle _{GF(q)}$ contains $AGL_{1}(q)$. Therefore, $\phi $ preserves at
least one of the $\lambda $ blocks incident with $\pm u$ since $\lambda$ is odd, say $B^{\prime}$. Hence, $B^{\prime}=\left\{ 0,u,-u\right\}\subseteq  \left\langle u\right\rangle _{GF(p)}\subset \left\langle u\right\rangle _{GF(q)}$. Then $B$ is contained in a $1$-dimensional $GF(p)$-subspace of $V$ since $G$ acts-flag-transitively on $\mathcal{D}$, but is contrary to our assumption.\\
Assume that $B$ is not contained in any $1$-dimensional $GF(q)$-subspace of $V$. Then there is a $2$-dimensional $GF(q)$-subspace $\pi$ of $V$ containing $B$. Note that, $G_{0}$ acts
transitively on the set of $2$-dimensional $GF(q)$-subspaces of $V$ by \cite{Per} since $SL_{n}(q)\trianglelefteq G_{0}$. Then
\begin{equation} \label{miumiu}
{n\brack 2}_{q}\mu=\frac{%
q^{n}-1}{2}\lambda \textit{,} 
\end{equation}
where $\mu$ denotes the number of blocks of $\mathcal{D}$ containing $0$ and contained in any $2$-dimensional $GF(q)$-subspace of $V$, since $\left(B^{G_{0}},\pi^{G_{0}} \right)$ is a $1$-design by \cite[1.2.6]{Demb}. Then
\begin{equation*}
\frac{\left( q^{n}-1\right) \left( q^{n-1}-1\right) }{\left( q^{2}-1\right)
\left( q-1\right) }\mid \frac{\left( q^{n}-1\right) \left( q^{n}-6\right) }{2%
},
\end{equation*}%
since $\lambda \mid q^{n}-6$. Thus $2\left( q^{n-1}-1\right) \mid \left( q^{2}-1\right) \left(
q-1\right)(q-6)$, and hence $n = 3$ since $n$ is odd and $n \geq 3$. Now, substituting $n=3$ in (\ref{miumiu}), it results $\mu=\frac{q-1}{2}\lambda$.

Now, $SL_{3}(q)$ induces $GL_{2}(q)$ on $\pi$, and hence the group $K$ induced by $G_{\pi}$ on $\pi$ acts point-$2$-transitive on $\pi$. Thus, $\left(\pi, B^{K} \right)$ is a $2$-$(q^{2},3,\lambda^{\prime})$ design with $\lambda^{\prime} \leq \lambda$ and replication number $r'=\mu$ since any triangle is contained in a unique plane. Then $\frac{q^{2}-1}{2}\lambda^{\prime}=\mu=\frac{q-1}{2}\lambda$. Thus, $\lambda=\lambda^{\prime}(q+1)$, which is not the case since both $q$ and $\lambda$ are odd.

Finally, assume that $B\subseteq \left\langle u\right\rangle _{GF(p)}$ for some non-zero
vector $u$ of $V$. Let $H$ be the group induced by $G_{\left\langle u\right\rangle_{GF(p)} }$ on $\left\langle u\right\rangle$, then $H\cong AGL_{1}(p)$ since $G$ is a subgroup of $AGL_{d}(p)$ acting $2$-transitively on $V$. Further $\left(\left\langle u\right\rangle_{GF(p)} ,B^{H} \right)$ is a $2$-$(p,3,\lambda)$ design, and $Z_{3} \leq H_{B}\leq S_{3}$ since $G_{B} \leq G_{\left\langle u\right\rangle_{GF(p)}}$. If $H_{B} \cong Z_{3}$, then $b^{\prime}=\frac{p(p-1)}{3}$, and hence $r^{\prime}=p-1$ and $\lambda=2$, whereas $\lambda$ is odd since $\lambda \mid p^{d}-6$ and $p$ is odd. Thus $H_{B} \cong S_{3}$, and hence $p=3$ since $H\cong AGL_{1}(p)$. Therefore $B=\left\langle u\right\rangle _{GF(3)}$, and hence $\mathcal{D}\cong AG_{h}(3)$.
\end{proof}

\bigskip

\begin{proof}[Proof of Theorem \ref{Main}] The assertion follows from Proposition \ref{tre2tr} and Theorems \ref{treAS}--\ref{treAff} according as $G$ does not or does act point-$2$-transitively on $\mathcal{D}$, respectively.
\end{proof}
\bigskip

\end{document}